\newcommand{\Spec}{\mbox{Spec}\,}
\newcommand{\depth}{\mbox{depth}\,}
\renewcommand{\dim}{\mbox{dim}\,}
\newcommand{\E}{\mbox{E}}
\newcommand{\V}{\mbox{V}}
\newcommand{\fp}{\mathfrak{p}}
\newtheorem{thm}{Theorem}[section]
\newtheorem{cor}[thm]{Corollary}
\newtheorem{lem}[thm]{Lemma}
\newtheorem{prop}[thm]{Proposition}
\newtheorem{rem}[thm]{Remark}
\numberwithin{equation}{section}
\begin{document}
\bibliographystyle{amsplain}

\title{Bipartite $S_2$ graphs are Cohen-Macaulay}

\author{Hassan Haghighi}

\address{Hassan Haghighi\\Department of Mathematics, K. N. Toosi
University of Technology, Tehran, Iran.}

\author{Siamak Yassemi}
\address{Siamak Yassemi\\Department of Mathematics, University of
Tehran, Tehran, Iran and School of Mathematics, Institute for
Research in Fundamental Sciences (IPM), Tehran Iran.}

\author{Rahim Zaare-Nahandi}
\address{Rahim Zaare-Nahandi\\School of Mathematics, Statistics \&
Computer Science, University of Tehran, Tehran, Iran.}

\thanks{Siamak Yassemi was supported in part by a grant from IPM No. 870130211}
\thanks{Emails: haghighi@kntu.ac.ir, yassemi@ipm.ir, rahimzn@ut.ac.ir}

\keywords{Bipartite graph, Cohen-Macaulay graph, Serre's condition,
chordal graph}

\subjclass[2000]{05C75, 13H10}

\begin{abstract}

\noindent In this paper we show that if the Stanley-Reisner ring of
the simplicial complex of independent sets of a bipartite graph $G$
satisfies Serre's condition $S_2$, then $G$ is Cohen-Macaulay. As a
consequence, the characterization of Cohen-Macaulay bipartite graphs
due to Herzog and Hibi carries over this family of bipartite graphs.
We check that the equivalence of Cohen-Macaulay property and the
condition $S_2$ is also true for chordal graphs and we classify
cyclic graphs with respect to the condition $S_2$.
\end{abstract}

\maketitle

\section*{Introduction}

Let $k$ be a field. To any finite simple graph $G$ with vertex set
$\V=[n]=\{1,\cdots,n\}$ and edge set $\E(G)$ one associates an ideal
$I(G)\subset k[x_1,\cdots ,x_n]$ generated by all monomials $x_ix_j$
such that $\{i,j\}\in\E(G)$. The ideal $I(G)$ and the quotient ring
$k[x_1,\cdots ,x_n]/I(G)$ are called the edge ideal of $G$ and the
edge ring of $G$, respectively. The simplicial complex of $G$ is
defined by
$$\Delta_G=\{A\subseteq \V|A \,\, \mbox{is an independent set in}\,\,
G\},$$ where $A$ is an independent set in $G$ if none of its
elements are adjacent. Note that $\Delta_G$ is precisely the
simplicial complex with the Stanley-Reisner ideal $I(G)$.

A graph $G$ is said to be Cohen-Macaulay (resp. Buchsbaum) over $k$,
if the edge ring of $G$ $k[x_1,\cdots,x_n]/I(G)$ is Cohen-Macaulay
(resp. Buchsbaum), and is called Cohen-Macaulay (resp. Buchsbaum) if
it is Cohen-Macaulay (resp. Buchsbaum) over any field. A graph is
said to be chordal if each cycle of length $> 3$ has a chord.

Let $\Delta$ be a simplicial complex. This complex is called
disconnected if the vertex set  $V$ of $\Delta$ is the disjoint
union of two nonempty sets $V_1$ and $V_2$ such that no face of
$\Delta$ has vertices in both $V_1$ and $V_2$, otherwise it is
called connected. A simplicial complex $\Delta$ is called
Cohen-Macaulay (resp. Buchsbaum) over an infinite field $k$ if its
Stanley-Reisner ring $k[\Delta]$ is Cohen-Macaulay (resp.
Buchsbaum).

It is known that if $\Delta$ is a disconnected simplicial complex,
then $\depth k[\Delta]=1$, \cite[Chapter 5, Ex. 5.1.26]{BH}. This
implies that if $\depth k[\Delta]>1$, then $\Delta$ is connected. In
particular, every Cohen-Macaulay simplicial complex of positive
dimension is connected.

A satisfactory classification of all Cohen-Macaulay graphs over a
field $k$ has been standing open for some time. However, as pointed
out by Herzog et al \cite[Introduction]{HHZ}, this is equivalent to
a classification of all Cohen-Macaulay simplicial complexes over $k$
which is clearly a hard problem. Accordingly, it is natural to study
special families of Cohen-Macaulay graphs. Recall that a graph $G$
on the vertex set $[n]$ is bipartite if there exists a partition
$[n]=V\cup W$ with $V\cap W=\varnothing$ such that each edge of $G$
is of the form $\{i,j\}$ with $i\in V$ and $j\in W$. It is easy to
see that a graph $G$ is bipartite if and only if it has no cycle of
odd length. For a Cohen-Macaulay bipartite graph $G$, Estrada and
Villarreal \cite{EV} showed that $G\setminus\{\nu\}$ is
Cohen-Macaulay for some vertex $\nu\in\V(G)$. In \cite{V1} it is
shown that the cyclic graph $C_n$ is Cohen-Macaulay if and only if
$n\in \{3,5\}$. Herzog and Hibi gave a graph-theoretic
characterization of all bipartite Cohen-Macaulay graphs. Due to our
direct application, we state their result.

\vspace{.1in}

\noindent{\bf Theorem} \cite[Theorem 3.4]{HH}. Let $G$ be a
bipartite graph with vertex partition $V\cup W$. Then the following
conditions are equivalent:

\begin{itemize}

\item[(a)] $G$ is a Cohen-Macaulay graph;

\item[(b)] $|V|=|W|$ and the vertices $V=\{x_1,\cdots,x_n\}$ and
$W=\{y_1,\cdots, y_n\}$ can be labeled such that:

\begin{verse}

(i) $\{x_i,y_i\}$ are edges for $i=1, \cdots , n$;

(ii) if $\{x_i,y_j\}$ is an edge, then $i\le j$;

(iii) if $\{x_i,y_j\}$ and $\{x_j,y_k\}$ are edges, then
$\{x_i,y_k\}$ is also an edge.

\end{verse}

\end{itemize}

Note that this result is characteristic-free.

Let $G$ be a graph with vertex set $\V(G)$ and edge set $\E(G)$. A
subset $C\subseteq\V(G)$ is a {\it minimal vertex cover} of $G$ if:
(1) every edge of $G$ is incident with a vertex in $C$, and (2)
there is no proper subset of $C$ with the first property. Observe
that a minimal vertex cover is the set of indeterminates which
generate a minimal prime ideal in the prime decomposition of $I(G)$.
Also note that $C$ is a minimal vertex cover if and only if
$\V(G)\setminus C$ is a maximal independent set, i.e., a facet of
$\Delta_G$.

A graph $G$ is called {\it unmixed} if all minimal vertex covers of
$G$ have the same number of elements, i.e., $\Delta_G$ is pure. It
is well known that every Cohen-Macaulay graph $G$ is unmixed. A
graph is called chordal if every cycle of length $>3$ has a chord.
Recall that a chord of a cycle is an edge which joins two vertices
of the cycle but is not itself an edge of the cycle.

Recall that a finitely generated graded module $M$ over a Noetherian
graded $k$-algebra $R$ is said to satisfy the Serre's condition
$S_n$ if
\[ \depth M_{\mathfrak p} \ge \min(n, \dim M_{\fp}), \]
for all ${\fp} \in \Spec(R).$ Thus, $M$ is Cohen-Macaulay if and
only if it satisfies the Serre's condition $S_n$ for all $n$. A
graph is said to satisfy the Serre's condition $S_n$, or simply is
an $S_n$ graph, if its edge ring satisfies this condition. Using
\cite[Lemma 3.2.1]{Sc} and Hochster's formula on local cohomology
modules, a pure $d$-dimensional Stanley-Reisner ring $k[\Delta]$
satisfies $S_2$ property if and only if
$\widetilde{H}_0(\mathrm{link}_\Delta(F);k)=0$ for all $F\in \Delta$
with $|F| \le d-2$ (see \cite[page 4]{T}).

The main result of this paper is to prove that if $G$ is a bipartite
$S_2$ graph, then $G$ is Cohen-Macaulay (see Theorem 1.3).
Consequently, the characterization of Cohen-Macaulay bipartite
graphs by Herzog and Hibi carries over bipartite $S_2$ graphs. It is
shown that not only for bipartite graphs but also for chordal graphs
Cohen-Macaulay property and the condition $S_2$ are equivalent. To
see an example of a non-Cohen-Macaulay $S_2$ graph, it is shown that
the cyclic graph $C_n$ of length $n\ge 3$ is $S_2$ if and only if
$n=3,5$ or $7$. In particular, $C_7$ is the only cyclic graph which
is $S_2$ but not Cohen-Macaulay. Finally, we reprove some known
results on certain bipartite Cohen-Macaulay graphs by providing
rather simpler proofs compared to the existing ones.

\section{The Main Result}

Our results are inspired by the aforementioned theorem of Herzog and
Hibi \cite[Theorem 3.4]{HH}.

\begin{prop}
Let $G$ be an unmixed bipartite graph with bipartition $V=\{x_1,
\cdots ,x_n\}$ and $W=\{y_1, \cdots ,y_n\}$ such that $\{x_i,y_i\}$
is an edge of $G$ for all $i=1,\cdots ,n$. Then $V$ and $W$ can be
simultaneously relabeled such that the following statements are
equivalent:

\begin{itemize}

\item[(a)] There exists a linear order $V=F_0,...,F_n=W$ on some of the
facets of $\Delta_G$  such that $F_i$ and $F_{i+1}$ intersect in
codimension one for $i=0, \cdots ,n-1$.

\item[(b)] If $\{x_i,y_j\}$ is an edge, then $i \le j$.

\end{itemize}
\end{prop}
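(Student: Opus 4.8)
The plan is to reduce both conditions to a single combinatorial property of $G$ and then choose the labeling accordingly. First I would use the hypotheses to describe the facets of $\Delta_G$ concretely. Since $G$ is bipartite with the perfect matching $\{x_i,y_i\}$ and is unmixed with $|V|=|W|=n$, every facet is an independent set of size $n$; as each matched pair $\{x_i,y_i\}$ is an edge, a facet can contain at most one of $x_i,y_i$, and counting forces it to contain exactly one from each pair. Hence the facets are in bijection with subsets $S\subseteq[n]$ via $F_S=\{x_i: i\in S\}\cup\{y_j: j\notin S\}$, with $V=F_{[n]}$ and $W=F_\varnothing$. Two facets meet in codimension one exactly when their subsets differ in a single index, so a chain $V=F_0,\dots,F_n=W$ as in (a) corresponds to a sequence $S_0,\dots,S_n$ with $|S_t\,\triangle\,S_{t+1}|=1$; because $|S_0|=n$, $|S_n|=0$ and each step changes the cardinality by one over $n$ steps, the chain is forced to be strictly decreasing, deleting exactly one index at each step.

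Next I would characterize which $F_S$ are facets. Write $i\to j$ whenever $\{x_i,y_j\}\in\E(G)$, so that $i\to i$ for all $i$. Then $F_S$ is independent --- and therefore, having size $n$, a facet --- if and only if there is no edge $\{x_i,y_j\}$ with $i\in S$ and $j\notin S$; equivalently, $S$ is closed under $\to$, meaning $i\in S$ and $i\to j$ force $j\in S$. For such a closed set one can delete an index $k$ and remain closed precisely when no $i\in S\smallsetminus\{k\}$ satisfies $i\to k$, i.e. when $k$ is minimal in $S$ for the preorder generated by $\to$. Consequently condition (a) is equivalent to the existence of a maximal descending chain $[n]=S_0\supsetneq\cdots\supsetneq S_n=\varnothing$ of $\to$-closed sets obtained by successively removing minimal elements.

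The key step is to show that such a chain exists if and only if the relation $\to$ has no directed cycle of length $\ge 2$. If $\to$ is acyclic it is a partial order, so every nonempty closed set has a minimal element whose deletion leaves a closed set, and peeling off minimal elements descends all the way to $\varnothing$. Conversely, given a directed cycle $c_1\to\cdots\to c_m\to c_1$ with $m\ge2$, as long as $c_1,\dots,c_m$ all lie in the current set $S$ each of them is pointed to by its predecessor around the cycle (also in $S$) and hence cannot be removed, so the chain can never descend past the cycle to $\varnothing$. Thus (a) holds if and only if $\to$ is acyclic, a property that is manifestly independent of the labeling.

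It remains to bring in (b) and fix the relabeling. Condition (b) says precisely that the chosen labeling is a linear extension of $\to$, i.e. $i\to j$ implies $i\le j$; such a labeling exists exactly when $\to$ is acyclic. I therefore relabel $V$ and $W$ simultaneously as follows: if $\to$ is acyclic, order the indices by a linear extension of $\to$, so that (b) holds, while (a) holds by the previous step; if $\to$ has a cycle, any labeling will do, for then (a) fails and, no linear extension existing, (b) fails as well. In every case (a) and (b) become equivalent. The main obstacle is the middle step: setting up the exact dictionary between facets and $\to$-closed subsets and proving the removal/cycle dichotomy, which is where unmixedness (forcing one vertex per matched pair) and the length-$n$ constraint (forcing a monotone descent) are genuinely used.
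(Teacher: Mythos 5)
Your proof is correct, but it is organized quite differently from the paper's. The paper argues directly: for (a)$\Rightarrow$(b) it observes that each codimension-one step of the chain must swap some $x_i$ for its matched partner $y_i$ (since $\{x_i,y_i\}$ is an edge, a facet containing $y_i$ cannot contain $x_i$), relabels the pairs in the order in which they are swapped, and concludes by induction that $F_i=\{y_1,\dots,y_i,x_{i+1},\dots,x_n\}$, which forces $i\le j$ for every edge $\{x_i,y_j\}$; for (b)$\Rightarrow$(a) it simply exhibits these same $F_i$ and checks they are facets. You instead encode every facet as a subset $S\subseteq[n]$ closed under the digraph relation $i\to j$ given by $\{x_i,y_j\}\in\E(G)$, show that a chain as in (a) is exactly a one-element-at-a-time peeling of closed sets, and reduce both (a) and (b) to acyclicity of $\to$. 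The two arguments share the same kernel (unmixedness plus the perfect matching force one vertex per pair in each facet, and each step of the chain is a matched swap), but your reformulation buys two things: it makes visible that, once the existential relabeling is quantified away, both conditions collapse to a single labeling-free property (acyclicity), and it treats the relabeling quantifier more scrupulously than the paper, which tacitly uses the labeling produced in (a)$\Rightarrow$(b) when proving (b)$\Rightarrow$(a) and does not discuss what happens when neither condition holds. The price is extra machinery (closed sets, topological sort, the cycle obstruction) for a statement the paper settles in a few lines. One small wording caveat: the element $k$ you may delete from a closed $S$ is a \emph{source} of $\to$ restricted to $S$ (no $i\in S\setminus\{k\}$ with $i\to k$), which is what your argument actually uses; calling it ``minimal for the preorder generated by $\to$'' is loose, though for closed $S$ the two notions agree.
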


By a simultaneous relabeling we mean that for all $i$, $x_i$ and
$y_i$ receive the same relabeling. In particular, under the
assumptions of Proposition 1.1, with the new labeling, $\{x_i,y_i\}$
is an edge of $G$ for all $i=1,\cdots ,n$.

Before proceeding on the proof of this Proposition note that the
condition (a) is weaker than strongly connectedness of $\Delta_G$.
Recall that a simplicial complex $\Delta$ is strongly connected if
for any two facets  $V$ and $W$ of $\Delta$ there exists a chain of
facets satisfying (a). Here we only need this sequence just for the
two specific facets $V$ and $W$.

\begin{proof} (a)$\Rightarrow$(b): We have $|F_1\setminus F_0|=1$,
say $F_1\setminus F_0=\{y_1\}$. Then $F_1=\{y_1,x_2,\cdots ,x_n\}$
because $\{x_1,y_1\}$ is not a face of $\Delta_G$. Similarly,
$|F_2\setminus F_1|=1$, say $F_2\setminus F_1=\{y_2\}$. Thus
$F_2=\{y_1,y_2,x_3,\cdots,x_n\}$ because again $\{x_2,y_2\}$ is not
a face of $\Delta_G$. Hence by induction we may assume that
$F_i=\{y_1,\cdots,y_i,x_{i+1},\cdots,x _n\}$ for $i=0,\cdots,n$. In
particular, if $i>j$, then $\{x_i,y_j\}$ is a face of $\Delta_G$,
and hence it is not an edge of $G$.

(b)$\Rightarrow$(a): Set
$F_i=\{y_1,\cdots,y_i,x_{i+1},\cdots,x_n\}$. It is easy to see that
for any $i$, $F_i$ is a maximal independent set and hence a facet of
$\Delta_G$. Moreover $F_i$ and $F_{i+1}$ intersect in codimension
one.

\end{proof}

\begin{lem}
Let $G$ be a bipartite graph. Then $G$ is a non-complete bipartite
graph if and only if $\Delta_G$ is connected.
\end{lem}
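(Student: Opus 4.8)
The plan is to reduce the statement to a purely graph-theoretic fact about the complement graph $\overline{G}$, by identifying the $1$-skeleton of $\Delta_G$ with $\overline{G}$. First I would recall that a simplicial complex is connected in the sense defined above precisely when its $1$-skeleton is a connected graph: if the vertex set splits as a disjoint union $V_1\cup V_2$ of nonempty sets with no face meeting both parts, then in particular no $1$-face (edge) meets both parts; conversely, since the faces of $\Delta_G$ are closed under passing to subsets, any face with vertices in both parts would contain such an edge. Thus disconnectedness of $\Delta_G$ is equivalent to disconnectedness of its $1$-skeleton.

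Next I would observe that the $1$-skeleton of $\Delta_G$ is exactly $\overline{G}$. Indeed, a pair $\{u,v\}$ is a face of $\Delta_G$ if and only if $\{u,v\}$ is an independent set in $G$, i.e. if and only if $u$ and $v$ are non-adjacent in $G$, which is precisely the condition that $\{u,v\}\in\E(\overline{G})$. Since every singleton is a face of $\Delta_G$, the two vertex sets agree as well. Hence $\Delta_G$ is connected if and only if $\overline{G}$ is connected.

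It then remains to check, for a bipartite $G$ with parts $V$ and $W$, that $\overline{G}$ is connected if and only if $G$ is not the complete bipartite graph on $V\cup W$. Here I would use that $G$ has no edge inside $V$ and none inside $W$, so in $\overline{G}$ both $V$ and $W$ induce complete subgraphs. Consequently $\overline{G}$ is connected exactly when there is at least one edge of $\overline{G}$ joining $V$ to $W$, that is, at least one pair $x\in V$, $y\in W$ with $\{x,y\}\notin\E(G)$; and this fails precisely when every such cross pair is an edge of $G$, i.e. when $G$ is complete bipartite. Combining with the previous paragraph yields the claimed equivalence.

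The argument is short, so the main point requiring care is the bookkeeping at the boundary and the passage between the two notions of connectedness. I would make explicit the equivalence in the first step, since the paper's definition is phrased in terms of arbitrary faces rather than edges, and I would confirm that $V$ and $W$ are both nonempty so that the induced cliques are nonempty and the existence of a cross non-edge is genuinely the only obstruction to connectedness of $\overline{G}$.
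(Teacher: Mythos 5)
Your proposal is correct and follows essentially the same route as the paper: both arguments come down to the observation that, since each side of the bipartition is itself an independent set, the only possible obstruction to connectedness of $\Delta_G$ is the absence of a non-adjacent pair $x\in V$, $y\in W$, i.e.\ completeness of $G$. The paper compresses this into a single chain of equivalences and leaves the last step (``$\{x,y\}$ independent, i.e.\ $\Delta_G$ is connected'') unjustified, whereas your reduction to the $1$-skeleton and the complement graph $\overline{G}$ supplies exactly that missing justification.
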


\begin{proof}
Let $V_1 \cup V_2$ be the bipartition of $G$. Then $G$ fails to be a
complete bipartite graph if and only if there are two vertices $
x\in V_1$ and $y \in V_2$ which are not adjacent, that is, $\{x,y\}$
is an independent set of $G$, i.e., $\Delta_G$ is connected.
\end{proof}

Now we may state the main result which in particular provides a
characterization of bipartite $S_2$ graphs.

\begin{thm}
Let $G$ be a bipartite graph with at least four vertices and with
vertex partition $V$ and $W$. Then the following are equivalent:

\begin{itemize}

\item[(a)] $G$ is unmixed and $V$ and $W$ can be labeled such that there
exists an order $V=F_0,\cdots,F_n=W$ of the facets of $\Delta_G$
where $F_i$ and $F_{i+1}$ intersect in codimension one for
$i=0,\cdots,n-1$.

\item[(b)] $G$ is a Cohen-Macaulay graph.

\item[(c)] $G$ is a Buchsbaum non-complete bipartite
graph.
\item[(d)] $G$ is an $S_2$ graph.

\end{itemize}
\end{thm}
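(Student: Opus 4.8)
The plan is to prove the cycle of implications $(b)\Rightarrow(c)\Rightarrow(d)\Rightarrow(a)\Rightarrow(b)$, exploiting the tools already assembled in the excerpt. The implications $(b)\Rightarrow(c)$ and $(c)\Rightarrow(d)$ are essentially formal: Cohen-Macaulay implies Buchsbaum, and any Cohen-Macaulay module satisfies $S_2$; the only genuine content in $(b)\Rightarrow(c)$ is that a Cohen-Macaulay graph of positive dimension is connected, so by Lemma 1.2 the graph $G$ must be non-complete bipartite. For $(c)\Rightarrow(d)$ I would recall that a Buchsbaum complex satisfies $S_2$ away from the irrelevant maximal ideal, and combine this with connectedness (again via Lemma 1.2) to upgrade the local depth conditions to genuine $S_2$; here the assumption of at least four vertices guarantees $\dim k[\Delta_G]\ge 2$, so that the $S_2$ condition has real force.

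The crucial and most substantive step is $(d)\Rightarrow(a)$, and this is where I expect the main obstacle to lie. First I would observe that an $S_2$ graph is unmixed: the $S_1$ part of $S_2$ forces $k[\Delta_G]$ to have no embedded primes and, combined with equidimensionality that $S_2$ provides on the level of local rings, yields purity of $\Delta_G$. The real work is to produce the required order $V=F_0,\dots,F_n=W$ of facets meeting pairwise in codimension one. The plan is to invoke the combinatorial translation of $S_2$ stated in the excerpt: for a pure $d$-dimensional complex, $S_2$ holds iff $\widetilde H_0(\mathrm{link}_{\Delta_G}(F);k)=0$ for all faces $F$ with $|F|\le d-2$. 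Taking $F=\varnothing$ gives $\widetilde H_0(\Delta_G;k)=0$, i.e. $\Delta_G$ is connected, which matches Lemma 1.2 and recovers non-completeness. The vanishing of $\widetilde H_0$ of \emph{all} small links is exactly the strong-connectedness-type input needed to chain facets together: one shows that any two facets can be joined by a sequence of facets each meeting the next in codimension one, a property often called being \emph{connected in codimension one}, which is the homological meaning of $S_2$ on links of small faces.

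Concretely, I would argue that $V$ and $W$ are themselves facets of $\Delta_G$ (each is a maximal independent set, since $V$ and $W$ are independent in a bipartite graph and unmixedness forces them to be facets once one checks their cardinalities agree), and then use connectedness in codimension one to extract a chain $V=F_0,\dots,F_n=W$ with $F_i\cap F_{i+1}$ of codimension one. The delicate point is controlling the codimension-one steps so that the intermediate facets have the right shape; I would feed the output into Proposition 1.1, whose equivalence (a)$\Leftrightarrow$(b) then shows that after a simultaneous relabeling the edges satisfy $i\le j$, and one verifies conditions (i) and (iii) of the Herzog--Hibi theorem. The hardest part of the whole argument is thus establishing connectedness in codimension one from the link-homology vanishing in $S_2$ and then threading it through the bipartite structure; once that is in hand, closing the loop $(a)\Rightarrow(b)$ is immediate by applying Proposition 1.1 to obtain the labeling (i)--(iii) and citing the Herzog--Hibi theorem quoted in the introduction.
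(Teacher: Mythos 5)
Your overall architecture coincides with the paper's: the same cycle of implications, Lemma 1.2 for non-completeness, the Buchsbaum-localizes-to-Cohen--Macaulay fact for $(c)\Rightarrow(d)$, and connectedness in codimension one for $(d)\Rightarrow(a)$ (which the paper obtains by a one-line citation of Hartshorne rather than re-deriving it from the link-homology criterion, but your route is legitimate). The genuine gap is in $(a)\Rightarrow(b)$, which you dismiss as ``immediate.'' Proposition 1.1 delivers only condition (ii) of the Herzog--Hibi theorem (edges $\{x_i,y_j\}$ satisfy $i\le j$). Condition (i), the perfect matching $\{x_i,y_i\}$, is not free either --- the paper extracts it from K\"onig's theorem applied to the unmixed bipartite graph $G$ before Proposition 1.1 can even be invoked --- and condition (iii), that $\{x_i,y_j\},\{x_j,y_k\}\in\E(G)$ forces $\{x_i,y_k\}\in\E(G)$, is the real combinatorial content of the theorem and does not follow from Proposition 1.1 at all. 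The paper proves (iii) by contradiction: if $\{x_i,y_k\}$ were a non-edge it would be a face of $\Delta_G$, hence by purity contained in a facet $F$ with $|F|=n$; such an $F$ meets each matched pair $\{x_t,y_t\}$ in exactly one vertex, so it must contain $x_j$ or $y_j$, contradicting that $\{x_i,y_j\}$ and $\{x_j,y_k\}$ are edges. You name conditions (i) and (iii) but supply no argument for them, and your assessment that the hard work lies in $(d)\Rightarrow(a)$ is inverted relative to the paper: there that step is essentially a citation, while $(a)\Rightarrow(b)$ carries the weight.

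A secondary soft spot: in $(d)\Rightarrow(a)$ you propose to get unmixedness from ``the $S_1$ part of $S_2$'' together with local equidimensionality, which is hand-waved; the cleaner route (and the paper's) is to first obtain the codimension-one chains between arbitrary facets and then read off that consecutive facets, hence all facets, have equal cardinality. Neither of these issues breaks the overall strategy, but as written the proposal omits precisely the step of the proof that requires an idea.
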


\begin{proof}
We prove $ (a) \Rightarrow (b) \Rightarrow (c) \Rightarrow (d)
\Rightarrow (a).$

(a)$\Rightarrow$(b): Since $G$ is unmixed, by K\"{o}nig's Theorem
there is a bipartition $V=\{x_1,\cdots,x_n\}$ and
$W=\{y_1,\cdots,y_n\}$ such that $\{x_i,y_i\}$ is an edge of $G$ for
all $i$. By Proposition 1.1, $V$ and $W$ can be relabeled such that
$\{x_i,y_i\}$ is an edge of $G$ for all $i$ and if $\{x_i,y_j\}$ is
an edge in $G$, then $i\le j$. We fix such a labeling. Let
$\{x_i,y_j\}$ and $\{x_j,y_k\}$ be edges of $G$ with $i<j<k$, and
suppose that $\{x_i,y_k\}$ is not an edge of $G$. Since
$\{x_i,y_k\}$ is a face of $\Delta_G$ and  $G$ is unmixed,
$\Delta_G$ is pure, hence there exists a facet $F$ of $\Delta_G$
with $|F|=n$ and $\{x_i,y_k\}\subset F$. Since $F$ is a facet of
$\Delta_G$, any 2-element subset of $F$ is a non-edge of $G$. We
have $y_j \notin F$ since $\{x_i,y_j\}$ is an edge of $G$. Similarly
$x_j \not \in F$ since $\{x_j,y_k\}$ is an edge of $G$. On the other
hand, since $\{x_t,y_t\}$ is an edge of $G$ for all $t$, the facet
$F$ can not contain both $x_t$ and $y_t$. Hence $F$ is of the form
$F=\{z_1,\cdots,z_n\}$, where $z_t=x_t$ or $y_t$ for $t=1,\cdots,n$.
Thus either $y_j$ or $x_j$ belongs to $F$, which is a contradiction.
Consequently, $G$ is Cohen-Macaulay by the theorem of Herzog and
Hibi.

(b)$\Rightarrow$(c): Since every Cohen-Macaulay ring is a Buchsbaum
ring, $G$ is also Buchsbaum. By definition, the ideal of the
simplicial complex $\Delta_G$ is equal to edge ideal of $G$. Hence
$\Delta_G$ is also Cohen-Macaulay and in particular, $\Delta_G$ is
connected. Therefore, by Lemma 1.2 $G$ is non-complete.

(c)$\Rightarrow$(d): By \cite[Corollary 2.7]{Y} the localization of
every Buchsbaum ring at any of its prime ideals which is not equal
to $(x_1,\cdots,x_n,y_1,\cdots,y_n)$, is Cohen-Macaulay. Therefore
$G$ satisfies the $S_2$ condition.

(d)$\Rightarrow$(a): Since $\Delta_G$ satisfies the $S_2$ condition,
by \cite[Corollary 2.4]{Hart} for any two facets $F$ and $H$ of
$\Delta_G$, there exist a positive integer $m$ and a sequence
$F=F_0,\cdots, F_m=H$ of facets of $\Delta_G$ such that $F_{i}$
intersects $F_{i+1}$ in codimension one for all $i=0,\cdots ,m-1$.
Hence $\Delta_G$ is strongly connected. In particular, since the
partitions $V$ and $W$ of the vertices of $G$ can be considered as
two facets of $\Delta_G$ and $\Delta_G$ is strongly connected, the
required sequence exists. Furthermore, $\mid F_i\mid=\mid F_i \cap
F_{i+1} \mid + 1= \mid F_{i+1} \mid$ for all $i=0,\cdots ,m-1$. This
implies that any two facets of $\Delta_G$ have the same number of
elements and hence $G$ is unmixed.
\end{proof}

\begin{rem}{\rm
The implication (b)$\Rightarrow$(a) in the above theorem does not
depend on the bipartite assumption of $G$ and is valid in a more
general setting. In fact a stronger implication is valid. More
precisely, every Cohen-Macaulay simplicial complex is strongly
connected. This follows, for example, by an argument similar to the
implication (d)$\Rightarrow$(a)}.
\end{rem}

\begin{rem}{\rm
Theorem 1.3 reveals that for bipartite graphs Cohen-Macaulay and
$S_2$ properties are equivalent. This raises the question whether
there are other families of graphs for which these two properties
are equivalent. Here, we show that,

\begin{itemize}

\item[(1)] Every chordal $S_2$ graph is Cohen-Macaulay.

\item[(2)] The cyclic graph $C_7$ is $S_2$ but not Cohen-Macaulay.

\end{itemize}

In fact, chordal graphs are shellable \cite[Theorem 2.13]{VV}. But
any $S_2$ graph is unmixed (see \cite[Corollary 5.10.9]{Gr}, or
\cite[Remark 2.4.1]{Hart}). Therefore, for chordal graphs
Cohen-Macaulay and $S_2$ properties are equivalent.

To establish (2) we classify all cyclic graphs $C_n$ with respect to
$S_2$ property.}

\end{rem}

\begin{prop}
The cyclic graph $C_n$ of length $n\ge 3$ is $S_2$ if and only if
$n=3,5$ or $7$. In particular, $C_7$ is the only cyclic graph which
is $S_2$ but not Cohen-Macaulay.
\end{prop}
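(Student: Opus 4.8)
The plan is to use the combinatorial criterion for $S_2$ stated in the introduction: for a pure $d$-dimensional Stanley–Reisner ring, $k[\Delta]$ satisfies $S_2$ iff $\widetilde{H}_0(\mathrm{link}_\Delta(F);k)=0$ for every face $F$ with $|F|\le d-2$. So first I would compute the relevant combinatorial data of $C_n$. The edge ideal $I(C_n)$ has minimal primes corresponding to minimal vertex covers of the cycle; the independence complex $\Delta_{C_n}$ is pure of dimension $d-1$ where $d$ is the size of a maximum independent set that is simultaneously a facet. For a cycle, every maximal independent set has the same size only when $n$ is of the right shape, so before applying the criterion I must check that $C_n$ is even unmixed. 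The independence number and vertex cover structure of cycles is classical: the maximal independent sets of $C_n$ need not all have the same cardinality, so I expect unmixedness itself to already cut down the admissible $n$.

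Let me think carefully about what I want.The plan is to combine two ingredients already available in the paper. First, every $S_2$ graph is unmixed (Remark 1.5, via \cite[Corollary 5.10.9]{Gr} or \cite[Remark 2.4.1]{Hart}), so only those $C_n$ whose independence complex $\Delta_{C_n}$ is pure can possibly be $S_2$. Second, for a pure $d$-dimensional Stanley--Reisner ring the criterion quoted in the introduction applies: $k[\Delta]$ is $S_2$ if and only if $\widetilde{H}_0(\mathrm{link}_\Delta(F);k)=0$ for every face $F$ with $|F|\le d-2$. So I would first cut the problem down to the unmixed cycles and then run this vanishing test on the survivors.

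To determine when $C_n$ is unmixed, note that the facets of $\Delta_{C_n}$ are the maximal independent sets, i.e.\ the independent dominating sets, and the largest of these has $\alpha(C_n)=\lfloor n/2\rfloor$ vertices. Since in a cycle each chosen vertex dominates only itself and its two neighbours, any dominating set has at least $\lceil n/3\rceil$ vertices, so the smallest facet has size $\lceil n/3\rceil$. Hence $\Delta_{C_n}$ is pure exactly when $\lceil n/3\rceil=\lfloor n/2\rfloor$, and a short floor/ceiling comparison shows this holds precisely for $n\in\{3,4,5,7\}$; for $n=6$ and for every $n\ge 8$ one simply exhibits a maximal independent set that is too small (for instance $\{1,4\}$ in $C_6$). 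This leaves only $C_3,C_4,C_5,C_7$ as candidates.

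Next I would apply the vanishing criterion to these four. For $C_3$ the facets are singletons, so $d=1$ and there is no face with $|F|\le -1$; the condition is vacuous and $C_3$ is $S_2$. For $C_4$ and $C_5$ the facets have two vertices, so $d=2$ and only $F=\varnothing$ must be tested, which asks whether $\Delta$ is connected. Here $C_4=K_{2,2}$ is complete bipartite, so by Lemma 1.2 the complex $\Delta_{C_4}$ is disconnected and $\widetilde{H}_0(\Delta_{C_4};k)\ne 0$, ruling $C_4$ out; for $C_5$ the complement $\overline{C_5}$ is connected, so $\Delta_{C_5}$ is connected and $C_5$ is $S_2$ (consistent with its Cohen--Macaulayness). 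The crux is $C_7$: its facets have three vertices, so $d=3$ and I must check $F=\varnothing$ and every singleton $F=\{v\}$. For $F=\varnothing$, $\mathrm{link}_\Delta(\varnothing)=\Delta_{C_7}$ is connected because $\overline{C_7}$ is connected; for $F=\{v\}$, deleting $v$ and its two neighbours leaves the path $P_4$, so the link is the independence complex of $P_4$, which one verifies directly to be connected, giving $\widetilde{H}_0=0$. Thus $C_7$ is $S_2$.

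Finally, since $C_n$ is Cohen--Macaulay if and only if $n\in\{3,5\}$ (the result of \cite{V1} quoted in the introduction), the cycle $C_7$ is $S_2$ but not Cohen--Macaulay, and it is the only such cyclic graph. I expect the main obstacle to be the unmixedness range itself: making the domination lower bound and the $\lceil n/3\rceil=\lfloor n/2\rfloor$ comparison airtight, together with the explicit link computation for $C_7$. The remaining steps are brief connectivity checks.
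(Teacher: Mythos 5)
Your proposal is correct and follows essentially the same route as the paper: use the fact that $S_2$ implies unmixed to reduce to $n\in\{3,4,5,7\}$, dispose of $n=3,4,5$ by known Cohen--Macaulay/connectivity facts, and then verify $C_7$ via the vanishing of $\widetilde{H}_0$ of the links of faces with $|F|\le 1$. The only differences are cosmetic: you re-derive the unmixedness classification of cycles (which the paper cites from Villarreal) and you rule out $C_4$ via Lemma 1.2 rather than via Theorem 1.3, while your explicit identification of the vertex links in $\Delta_{C_7}$ with the independence complex of $P_4$ is exactly the ``direct inspection'' the paper leaves to the reader.
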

\begin{proof} It is known that $C_n$ is Cohen-Macaulay if and only
if $n=3, 5$ \cite[Corollary 6.3.6]{V1}. On the other hand, $C_n$ of
length $n\ge 3$ is unmixed if and only if $n=3, 4, 5, 7$
\cite[Exercise 6.2.15]{V1}. Accordingly, $C_3$ and $C_5$ are $S_2$.
Since $C_4$ is bipartite but not Cohen-Macaulay, by Theorem 1.3 it
is not $S_2$. Furthermore, as mentioned before, every $S_2$ graph is
unmixed. Thus, the only cyclic graph which remains to be checked is
$G=C_7$. To settle this, we apply the cohomological criterion for
$S_2$ property mentioned in the introduction. In fact, we need to
check that for all $F\in \Delta_G$ with $|F| \le 1$,
$\widetilde{H}_0(\mathrm{link}_{\Delta_G}(F);k)=0$. This condition
is satisfied if $\mathrm{link}_{\Delta_G}(F)$ is connected which can
easily be checked by direct inspection.
\end{proof}

In light of Theorem 1.3, we consider some known results on certain
bipartite Cohen-Macaulay graphs and we provide rather simpler proofs
compared to the existing ones.

As a consequence of Theorem 1.3(b) we may state the following result
on the structure of trees satisfying the condition $S_2$.

\begin{cor} \cite[Theorem 6.3.4]{V1}
Let $G$ be a tree with at least four vertices. Then the following
are equivalent:

\begin{itemize}

\item[(a)] $G$ satisfies the condition $S_2$.

\item[(b)] There is a bipartition $V=\{x_1,\cdots, x_n\}$,
$W=\{y_1,\cdots ,y_n\}$ of $G$ such that

\begin{verse}

(i) $\{x_i,y_i\}\in\E(G)$ for all $i$.

(ii) for each $i$ either $\deg(x_i)=1$ or $\deg(y_i)=1$,
exclusively.

(iii) $V$ and $W$ can be simultaneously relabeled such that there
exists an order $V=F_0,\cdots ,F_n=W$ of the facets of $\Delta_G$
where $F_i$ and $F_{i+1}$ intersect in codimension one for
$i=0,\cdots ,n-1$.

\end{verse}

\end{itemize}

\end{cor}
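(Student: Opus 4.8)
The plan is to read off the equivalence from Theorem 1.3 together with the Herzog--Hibi characterization, using the acyclicity of $G$ only to produce (and later to exploit) condition (ii). Since a tree is connected, bipartite and has at least four vertices, Theorem 1.3 applies and already gives that (a) [$S_2$] is equivalent to $G$ being Cohen--Macaulay, hence to $G$ being unmixed together with the facet-chain condition (iii). It therefore suffices to prove the two implications (a)$\Rightarrow$(b) and (b)$\Rightarrow$(a), in each case passing through the Herzog--Hibi conditions.

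For (a)$\Rightarrow$(b): assuming $G$ is $S_2$, Theorem 1.3 makes $G$ Cohen--Macaulay and unmixed; by K\"onig's theorem I label $V=\{x_1,\dots,x_n\}$, $W=\{y_1,\dots,y_n\}$ so that $\{x_i,y_i\}\in\E(G)$ for all $i$ — this is (i) — and by the Herzog--Hibi theorem, after the relabeling of Proposition 1.1, I may assume $\{x_i,y_j\}\in\E(G)\Rightarrow i\le j$ together with the transitivity property, as well as the facet-chain condition (iii). To obtain (ii) I would argue by acyclicity: if some matching pair had $\deg(x_j)\ge 2$ and $\deg(y_j)\ge 2$, then $x_j$ is adjacent to some $y_k$ with $k>j$ and $y_j$ to some $x_i$ with $i<j$; applying Herzog--Hibi transitivity to $\{x_i,y_j\}$ and $\{x_j,y_k\}$ yields the edge $\{x_i,y_k\}$, whence $x_i,y_j,x_j,y_k$ span a $4$-cycle, contradicting that $G$ is a tree. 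Thus at least one of $\deg(x_i),\deg(y_i)$ equals $1$; and both cannot equal $1$, since $\{x_i,y_i\}$ would then be an isolated component of the connected graph $G$ on more than two vertices. This gives (ii).

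For (b)$\Rightarrow$(a): a simultaneous relabeling alters neither the matching edges nor the vertex degrees, so I may assume (i), (ii) and the chain in (iii) hold for one and the same labeling. Running the argument of Proposition 1.1 on this chain forces $F_i=\{y_1,\dots,y_i,x_{i+1},\dots,x_n\}$ and hence $\{x_i,y_j\}\in\E(G)\Rightarrow i\le j$. Together with (i) this verifies two of the three Herzog--Hibi conditions; for the third, suppose $\{x_i,y_j\},\{x_j,y_k\}\in\E(G)$, so $i\le j\le k$. If $i=j$ or $j=k$ the required edge $\{x_i,y_k\}$ is already one of the hypotheses, while if $i<j<k$ then $x_j$ is adjacent to both $y_j$ and $y_k$ and $y_j$ to both $x_j$ and $x_i$, forcing $\deg(x_j)\ge 2$ and $\deg(y_j)\ge 2$ in violation of (ii); so this case is vacuous and the transitivity holds. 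By the Herzog--Hibi theorem $G$ is then Cohen--Macaulay, hence $S_2$ by Theorem 1.3, which is (a).

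The only genuine work is condition (ii): in the direction (a)$\Rightarrow$(b) I expect the main obstacle to be assembling the forbidden $4$-cycle, which requires invoking Herzog--Hibi transitivity as an auxiliary tool \emph{before} acyclicity can be applied; in the direction (b)$\Rightarrow$(a) the key observation is dual, namely that (ii) renders the hypotheses of the transitivity condition unsatisfiable outside the trivial cases. Everything else is bookkeeping via Theorem 1.3, Proposition 1.1 and the characterization of Herzog and Hibi.
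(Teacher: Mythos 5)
Your proof is correct. The paper itself supplies no argument here: it presents Corollary 1.7 as ``a consequence of Theorem 1.3(b)'' and outsources the graph-theoretic content to Villarreal's characterization of Cohen--Macaulay trees \cite[Theorem 6.3.4]{V1}. You instead make the corollary self-contained: the reduction of $S_2$ to Cohen--Macaulayness via Theorem 1.3 and the extraction of (i) and (iii) via K\"onig's theorem and Proposition 1.1 follow the paper's intended route, but the two pieces of genuine work --- deriving (ii) from acyclicity by closing a forbidden $4$-cycle $x_i$--$y_j$--$x_j$--$y_k$--$x_i$ with the Herzog--Hibi transitivity edge $\{x_i,y_k\}$, and conversely observing that (ii) makes the hypotheses of transitivity unsatisfiable except in the degenerate cases $i=j$ or $j=k$ --- are yours, and they are exactly what is needed. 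One small point of hygiene: in (a)$\Rightarrow$(b) you invoke ``Herzog--Hibi transitivity'' for the \emph{specific} labeling produced by Proposition 1.1, whereas the Herzog--Hibi theorem only asserts the existence of \emph{some} good labeling; what you actually need is the argument inside the proof of Theorem 1.3, (a)$\Rightarrow$(b), which shows that for an unmixed $G$ any labeling with $\{x_i,y_i\}\in\E(G)$ and $\{x_i,y_j\}\in\E(G)\Rightarrow i\le j$ automatically satisfies transitivity; since $G$ is Cohen--Macaulay, hence unmixed, this is available and the gap is purely one of citation. The payoff of your route is a proof that does not depend on \cite{V1} and that isolates where the tree hypothesis enters (only in establishing (ii)); the cost is that you silently use the (a)$\Rightarrow$(b) half of Proposition 1.1 without its unmixedness hypothesis in your (b)$\Rightarrow$(a) step --- harmless, since that direction of the proof never uses purity, but worth a remark.
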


From part (b)$(ii)$ of Corollary 1.7 it follows that every tree with
$2n$ vertices which satisfies the condition $S_2$, has precisely $n$
vertices of degree one.

\begin{cor}
Every path of length greater than four does not satisfy the
condition $S_2$ and hence it is not Cohen-Macaulay.
\end{cor}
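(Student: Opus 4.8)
The plan is to reduce the statement to the leaf-counting fact recorded immediately after Corollary 1.7. First I would note that a path $P$ is a tree, hence bipartite, so Theorem 1.3 applies and tells us that $P$ satisfies $S_2$ if and only if $P$ is Cohen-Macaulay. Thus it suffices to prove that a path with more than four vertices fails to be Cohen-Macaulay; the assertion about $S_2$ is then immediate, and the closing clause ``hence it is not Cohen-Macaulay'' is automatic, since the Cohen-Macaulay property always implies $S_2$.

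The heart of the argument is a degree-one count. Suppose for contradiction that a path $P$ with more than four vertices is $S_2$. Being a bipartite $S_2$ graph, $P$ is Cohen-Macaulay by Theorem 1.3, so by the Herzog--Hibi description its two sides $V$ and $W$ satisfy $|V|=|W|$; in particular $P$ has an even number $2n$ of vertices. Now the Remark following Corollary 1.7 asserts that a tree on $2n$ vertices satisfying $S_2$ has precisely $n$ vertices of degree one. But a path has exactly two vertices of degree one, namely its two endpoints. Hence $n=2$ and $P$ has exactly four vertices, contradicting the hypothesis.

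For the case of a path with an odd number of vertices the parity already blocks everything: such a path cannot have $|V|=|W|$, so it is not a Cohen-Macaulay bipartite graph and therefore, by Theorem 1.3, not $S_2$. Combining the even and odd cases, every path with more than four vertices fails the $S_2$ condition and consequently fails to be Cohen-Macaulay.

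I do not expect a serious obstacle here; the only point requiring care is to align the parity of the vertex count with the hypothesis of the Remark, which is phrased for trees on $2n$ vertices, and to confirm that the threshold is genuinely ``four'' rather than ``three.'' The latter amounts to checking that $P_4$ itself \emph{is} Cohen-Macaulay, which one verifies directly from the Herzog--Hibi conditions by labeling the two matched pairs of $P_4$ in reverse order so that (ii) and (iii) hold; this boundary computation is what pins down the constant in the statement.
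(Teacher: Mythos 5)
Your proof is correct and takes exactly the route the paper intends: Corollary 1.8 is stated immediately after the observation (drawn from Corollary 1.7(b)(ii)) that an $S_2$ tree on $2n$ vertices has precisely $n$ vertices of degree one, and your argument is just that leaf count --- a path has only two endpoints of degree one --- forcing $n=2$, together with the parity case and the boundary check that $P_4$ is Cohen--Macaulay. No gaps.
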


By Corollary 1.7 every bipartite $S_2$ graph has at least two
vertices of degree one. From this fact and Theorem 1.3 we get the
following result which is a special case of \cite[Proposition
6.2.1]{V1}.

\begin{prop}
Let $G$ be a bipartite $S_2$ graph. Let $y$ be a vertex of degree
one of $G$ and $x$ its adjacent vertex. Then $G \setminus \{x,y\}$
is still an $S_2$ graph.
\end{prop}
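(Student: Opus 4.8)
The plan is to reduce the statement to a purely combinatorial verification via the Herzog--Hibi characterization, exploiting the equivalence of $S_2$ and Cohen--Macaulayness for bipartite graphs in Theorem 1.3. First I would note that $G':=G\setminus\{x,y\}$ is an induced subgraph of a bipartite graph, hence again bipartite, and that a Cohen--Macaulay ring automatically satisfies $S_2$ (a Cohen--Macaulay module satisfies $S_n$ for every $n$). So it suffices to prove that $G'$ is Cohen--Macaulay. Since $G$ is bipartite and $S_2$, Theorem 1.3 gives that $G$ is Cohen--Macaulay (assuming $G$ has at least four vertices, the degenerate case being immediate, e.g. $G=K_2$ where $G'$ is empty), so by the Herzog--Hibi theorem I may fix a labeling $V=\{x_1,\dots,x_n\}$, $W=\{y_1,\dots,y_n\}$ satisfying conditions (i), (ii), (iii) of that theorem.

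Next I would pin down the removed pair $\{x,y\}$ inside this labeling, the key point being that a vertex of degree one must lie on the ``diagonal''. Indeed, if $y=y_\ell\in W$, then by (i) the edge $\{x_\ell,y_\ell\}$ forces $x_\ell$ to be a neighbor of $y_\ell$, so degree one makes $x_\ell$ the unique neighbor and $x=x_\ell$; symmetrically, if $y=x_\ell\in V$, then (i) forces $y_\ell$ to be its unique neighbor and $x=y_\ell$. In either case $\{x,y\}=\{x_\ell,y_\ell\}$ for some index $\ell$, so passing to $G'$ amounts to deleting the single index $j:=\ell$ from the labeling.

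Then I would transport the labeling to $G'$ by the order-preserving bijection $\phi\colon\{1,\dots,n\}\setminus\{j\}\to\{1,\dots,n-1\}$ (the identity below $j$, a shift down by one above $j$), setting $x'_{\phi(i)}:=x_i$ and $y'_{\phi(i)}:=y_i$ for $i\ne j$, and check that conditions (i)--(iii) are inherited. Condition (i) holds because each retained diagonal edge $\{x_i,y_i\}$ with $i\ne j$ survives in $G'$; condition (ii) holds because $\phi$ is monotone, so $i\le k$ translates to $\phi(i)\le\phi(k)$; and condition (iii) holds because if $\{x_i,y_k\}$ and $\{x_k,y_l\}$ are edges of $G'$ with $i,k,l\ne j$, then transitivity in $G$ produces the edge $\{x_i,y_l\}$, which involves neither $x_j$ nor $y_j$ and hence lies in $G'$. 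I would also record that $G'$ has equal part sizes $n-1$ and no isolated vertices (each remaining $x_i$ keeps its diagonal edge). By the Herzog--Hibi theorem $G'$ is Cohen--Macaulay, and therefore $S_2$, as required.

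The argument is essentially bookkeeping, so I do not anticipate a serious obstacle; the one step deserving care is the claim that a degree-one vertex occupies a diagonal position $\{x_\ell,y_\ell\}$, since everything downstream relies on the deletion respecting the diagonal structure. A secondary point worth stating explicitly is that condition (iii), the transitivity condition, is the only place where one might fear that a deleted vertex was needed as an intermediate term; but the three indices entering any instance of (iii) for $G'$ all avoid $j$ by construction, so no such instance is lost.
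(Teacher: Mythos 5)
Your proof is correct, but it takes a genuinely different route from the paper's. The paper stays inside the independence complex: it takes the chain $V=F_0,\dots,F_n=W$ of facets of $\Delta_G$ with consecutive codimension-one intersections supplied by Theorem 1.3(a), observes that each facet contains exactly one of $x,y$ (because the complementary minimal vertex cover must contain exactly one of them), deletes $x$ and $y$ from every term, notes that $F_0\setminus\{x\}=F_1\setminus\{y\}$ so the chain collapses to length $n-1$, and reads off condition (a) of Theorem 1.3 for $G\setminus\{x,y\}$. You instead pass to the Herzog--Hibi labeling (condition (b) of their theorem): your key observation that a degree-one vertex must occupy a diagonal position $\{x_\ell,y_\ell\}$ is exactly right, since condition (i) forces $x_\ell$ to be a neighbor of $y_\ell$ and degree one makes it the unique one, and conditions (i)--(iii) visibly survive the deletion of a single diagonal index under the monotone relabeling $\phi$. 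Both arguments lean on Theorem 1.3 for the equivalence of $S_2$ and Cohen--Macaulayness of bipartite graphs. What yours buys is that the inheritance of (i)--(iii) is transparent bookkeeping, and you make explicit two points the paper leaves implicit: that the deleted pair is a diagonal edge, and that no surviving vertex becomes isolated (each keeps its own diagonal edge), which is needed before reapplying Herzog--Hibi. What the paper's facet-chain version buys is that it produces the structure of Theorem 1.3(a) for $G\setminus\{x,y\}$ directly, which feeds the subsequent Remark 1.10; on the other hand it has to justify that the truncated sets really are facets of $\Delta_{G\setminus\{x,y\}}$, a step your labeling argument avoids.
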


\begin{proof} Since $G$ is bipartite, there exists an order
$V=F_0,\cdots, F_n=W$ of facets of $\Delta_G$ such that for each $i
=0,\cdots, n-1, F_i$ intersects $F_{i+1}$ in codimension one. Since
for each $i$, $V \cup W \setminus F_i$ is a minimal vertex cover of
$G$, it contains exactly one of the vertices $x$ or $y$. Thus $F_i$
contains $y$ or $x$ respectively. Again since any facet of
$\Delta_G$ is an independent set, none of these facets can contain
both of these elements. Thus, if we delete both of these elements
from $V(G)$, then they will be deleted from each element of the
sequence $V=F_0, \cdots, F_n=W$. By construction
$F_0\setminus\{x\}=F_1\setminus\{y\}$, and hence we obtain a
sequence of length $n-1$ of facets of $\Delta_{G\setminus \{x,y\}}$
such that each two consecutive members of this sequence intersect
each other in codimension one. Now the claim follows from Theorem
1.3(b).

\end{proof}

\begin{rem}{\rm
A careful inspection of the proof of Proposition 1.9 reveals that
every edge $\{x,y\}$ where $y$ is an arbitrary degree one vertex of
$G$, intersects every member of the sequence $F_0,\cdots,F_n$.
Conversely, if we add a new vertex $x_{n+1}$ to $V$ and a new vertex
$y_{n+1}$ to $W$ and the edge $\{x_{n+1},y_{n+1}\}$ to $G$, then the
bipartite graph $G_1=V_1\cup W_1$, where $V_1=V\cup \{x_{n+1}\}$ and
$W_1=W \cup \{y_{n+1}\}$, has the sequence $F_0\cup \{x_{n+1}\},
F_1\cup \{x_{n+1}\},\cdots, F_n\cup \{x_{n+1}\}, F_{n+1}=F_n \cup
\{y_{n+1}\}$ as a subsequence of its facets which satisfies the
assumption of Theorem 1.3(b), hence $G_1$ is an $S_2$ graph.}
\end{rem}

We end this paper with the following immediate result which is again
a special case of \cite[Proposition 6.2.1]{V1}.

\begin{cor}
Let $G$ be a tree with more than two vertices which is $S_2$. Let
$x$ be a degree one vertex of $G$ and $y$ its adjacent vertex. Then
$G \setminus \{x,y\}$ is an $S_2$ graph.
\end{cor}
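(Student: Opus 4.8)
The plan is to recognize that this corollary is not a genuinely new result but a direct specialization of Proposition~1.9, so the entire strategy reduces to verifying that the hypotheses of that proposition are met. The one gap to bridge is that Proposition~1.9 is stated for bipartite graphs, whereas here $G$ is merely assumed to be a tree. Accordingly, the first and essentially only substantive step is to observe that every tree is bipartite: a tree contains no cycles at all, hence in particular no cycle of odd length, and by the criterion recalled in the introduction (a graph is bipartite if and only if it has no odd cycle) it follows that $G$ admits a bipartition $V\cup W$. Thus $G$ is automatically a bipartite $S_2$ graph.

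With this in hand, I would simply invoke Proposition~1.9. The only point requiring a word of care is the bookkeeping of labels: in Proposition~1.9 the degree-one vertex is named $y$ and its neighbor $x$, whereas in the present statement the degree-one vertex is named $x$ and its neighbor $y$. Since the roles are symmetric and the deleted set $\{x,y\}$ is the same in either naming convention, one applies Proposition~1.9 with $x$ here playing the role of $y$ there (the leaf) and $y$ here playing the role of $x$ there (its adjacent vertex). The conclusion that $G\setminus\{x,y\}$ is an $S_2$ graph is then immediate.

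There is, honestly, no real obstacle in this argument; the entire mathematical content lives in Proposition~1.9, and the corollary extracts the tree case by the single observation that trees are bipartite. The only thing worth double-checking is that the hypothesis ``more than two vertices'' is consistent: a tree on three vertices is the path $x\!-\!y\!-\!z$, whose edge ideal has minimal vertex covers $\{y\}$ and $\{x,z\}$ of unequal size, so such a tree is not unmixed and hence, by the remark that $S_2$ forces unmixedness, never $S_2$. The assumption ``$G$ is $S_2$'' therefore silently restricts us to trees on at least four vertices, exactly the regime where Proposition~1.9 operates, so no degenerate case escapes the argument.
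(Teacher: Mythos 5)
Your argument is exactly the one the paper intends: the corollary is stated as an immediate special case of Proposition~1.9, and the only content is the observation that a tree has no (odd) cycles and is therefore bipartite, plus the harmless swap of the labels $x$ and $y$. Your extra check that the hypothesis ``more than two vertices'' causes no trouble (since a three-vertex tree is not unmixed, hence not $S_2$) is a welcome refinement, but the approach is the same as the paper's.
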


\section*{Acknowledgments}

This paper was initiated during Yassemi's visit of the Max-Planck
Institute fur Mathematics (MPIM). He would like to thank the
authorities of MPIM for their hospitality during his stay there. The
authors are grateful to the referee for a careful review of this
paper and for several corrections and remarks.

\providecommand{\bysame}{\leavevmode\hbox
to3em{\hrulefill}\thinspace}

\end{document}